\newtheorem{theorem}{Theorem}
\newtheorem{lemma}[theorem]{Lemma}
\newtheorem{corollary}[theorem]{Corollary}
 \newtheorem{prop}[theorem]{Proposition}
\theoremstyle{definition}
\newtheorem{definition}[theorem]{Definition}
\begin{document}

\title{Metric Spaces where Geodesics are Never Unique}%
\author{Amlan Banaji}
\date{}

\keywords{geodesics, multigeodesic spaces}%

\subjclass[2020]{51F99 (Primary), 46B20, 52A21 (Secondary)}%

\begin{abstract}
This article concerns a class of metric spaces, which we call \emph{multigeodesic spaces}, where between any two distinct points there exist multiple distinct minimising geodesics. We provide a simple characterisation of multigeodesic normed spaces and deduce that $(C([0,1]),||\cdot||_1)$ is an example of such a space, but that finite-dimensional normed spaces are not. We also investigate what additional features are possible in arbitrary metric spaces which are multigeodesic. 
\end{abstract}

\maketitle

\section{Introduction}
Geodesics are objects of great significance in geometry, with particular importance in Riemannian geometry and general relativity, as well as in the calculus of variations. They are curves which in some sense describe the shortest, or most efficient, path between two points in a space. 
More formally, in this article we will use the following definition of a geodesic: 
\begin{definition}\label{d:geo}
If $u,v$ are distinct points in a metric space $(X,d)$ then a \textit{geodesic} from $u$ to $v$ is a function $\gamma \colon [0,1] \to X$ such that $\gamma(0) = u$, $\gamma(1) = v$, and  
\begin{equation}\label{e:lengthmin}
 d(\gamma(s),\gamma(t)) = |s-t|d(u,v) \qquad \mbox{for all } s,t \in [0,1].
 \end{equation}
\end{definition} 
In this article we insist the length-minimising condition~\eqref{e:lengthmin} holds globally; some other authors call the curves defined in Definition~\ref{d:geo} \emph{minimising geodesics} or \emph{shortest paths}, and define geodesics as curves which satisfy~\eqref{e:lengthmin} only locally. 
 All the spaces $(X,d)$ in this article will be \textit{geodesic}, which means that for all distinct $u,v \in X$ there is a geodesic from $u$ to $v$. The celebrated Hopf--Rinow theorem implies that every connected Riemannian manifold that is a complete metric space is a geodesic space~\cite{b:hopf}. 
A space is said to be \textit{uniquely geodesic} if for all distinct $u,v \in X$ there is exactly one geodesic from $u$ to $v$. %
We make the following definition, which will be the main focus of this article. 
\begin{definition}
A metric space $(X,d)$ is a \emph{multigeodesic space} if for any distinct $u,v \in X$ there exist at least two distinct geodesics from $u$ to $v$. 
\end{definition}
It is not immediately obvious whether multigeodesic spaces exist. 
The circle is not an example because~\eqref{e:lengthmin} is a global condition. 
On first thought it may seem that the space $\mathbb{R}^2$ with the `taxicab' metric 
\[ d((x_1,y_1),(x_2,y_2)) = |x_1 - x_2| + |y_1 - y_2| \]
 provides an example, but it does not, since (for example) there will be only one geodesic between two different points with the same $y$-coordinate, as depicted in Figure~\ref{f:onenorm}. 
\begin{figure}[ht]
\center{\includegraphics[width=0.6\textwidth]
        {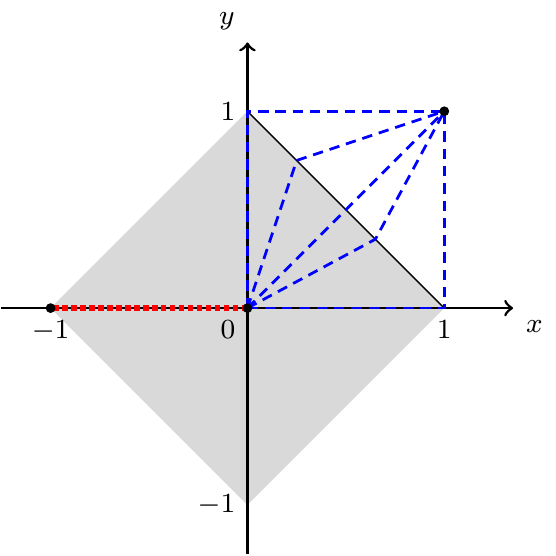}}
        \caption{\label{f:onenorm}
        The space $\mathbb{R}^2$ equipped with the taxicab metric (which is induced by the $1$-norm $||(x,y)||_1 = |x| + |y|$, whose unit ball is shaded grey). This space is geodesic, but neither uniquely geodesic nor multigeodesic. The dashed lines show several different geodesics between $(0,0)$ and $(1,1)$. The dotted line shows the unique geodesic between $(0,0)$ and $(-1,0)$. 
 }
\end{figure}

 One example of a multigeodesic space is what we call the~\emph{Laakso space}, a representation of which is shown in Figure~\ref{f:laakso}. The construction of this space is described in~\cite[Section~2]{b:lang}, which is in turn based on the construction in~\cite[Section~2]{b:laakso}. The space is built iteratively, like the Cantor set, except that instead of the middle part of each interval being removed, it is replaced by two `middle parts.' More precisely, each segment of length $L$ is replaced by six segments of length $L/4$, producing two geodesics between the endpoints of the initial segment. 
 Since this process is repeated at all scales, the Laakso space is multigeodesic. 
In Section~\ref{s:normed} we will provide a much wider variety of examples of multigeodesic spaces, including $L^1$ spaces, and give a characterisation of multigeodesic normed spaces. 
\begin{figure}[ht]
\center{\includegraphics[width=0.85\textwidth]
        {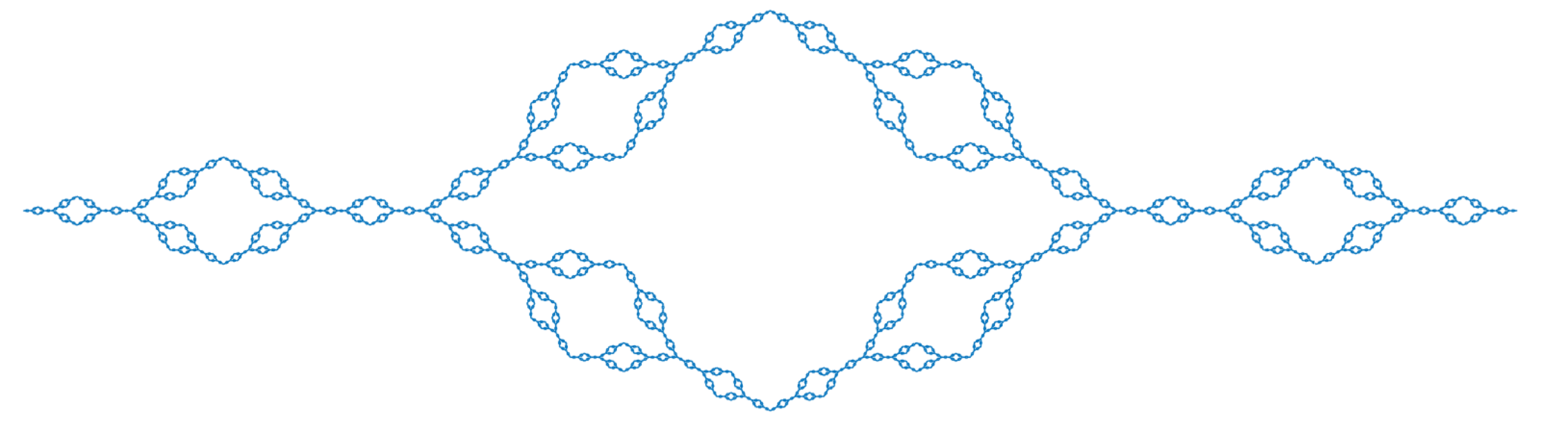}}
        \caption{\label{f:laakso}
        The Laakso space is multigeodesic. The metric here is different from the one induced by the Euclidean distance; indeed, it is shown in~\cite[Theorem~2.3]{b:lang} that the space cannot embed into Euclidean space via a bi-Lipschitz mapping. 
 }
\end{figure}

\section{Normed vector spaces}\label{s:normed}
In this section we will restrict our attention to normed vector spaces (which we assume will be real, with the metric induced by the norm). 
Any normed vector space is geodesic, since the straight line segment between any two distinct points is clearly a geodesic. It can be shown, as in~\cite[pages~5--6, 47--48]{b:convex}, that a normed vector space is uniquely geodesic if and only if the unit ball is strictly convex. Therefore, for example, any inner product space is uniquely geodesic, and for any $n \in \mathbb{N}$, the space $(\mathbb{R}^n,||\cdot||_p)$ is uniquely geodesic if $p \in (1,\infty)$ but not when $p=1$ or $\infty$. 
Theorem~\ref{thm:main} will provide a simple way of of characterising multigeodesic normed vector spaces, from which we can provide examples showing that such spaces exist. 
First, we need three lemmas. 
\begin{lemma}\label{lem:leqgeo}
  Let $(X,d)$ be a metric space, let $u,v \in X$, and suppose $\gamma \colon [0,1] \to X$ satisfies $\gamma(0) = u$, $\gamma(1) = v$. Then $\gamma$ is a geodesic if and only if for all $s,t \in [0,1]$ we have $d(\gamma(s),\gamma(t)) \leq |s-t|d(u,v)$.
\end{lemma}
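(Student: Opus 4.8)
The plan is to prove the two directions separately. The forward direction is immediate: if $\gamma$ is a geodesic, then by Definition~\ref{d:geo} we have $d(\gamma(s),\gamma(t)) = |s-t|d(u,v)$ for all $s,t$, which in particular gives the inequality $d(\gamma(s),\gamma(t)) \leq |s-t|d(u,v)$.

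For the converse, suppose $d(\gamma(s),\gamma(t)) \leq |s-t|d(u,v)$ holds for all $s,t \in [0,1]$; I must upgrade this to equality. The key idea is to use the triangle inequality together with the endpoint constraints $\gamma(0)=u$, $\gamma(1)=v$. Fix $s,t \in [0,1]$ with, say, $s \leq t$. Then
\begin{equation*}
d(u,v) = d(\gamma(0),\gamma(1)) \leq d(\gamma(0),\gamma(s)) + d(\gamma(s),\gamma(t)) + d(\gamma(t),\gamma(1)).
\end{equation*}
Applying the hypothesised inequality to each of the three terms on the right gives an upper bound of $s\,d(u,v) + (t-s)\,d(u,v) + (1-t)\,d(u,v) = d(u,v)$. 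Hence all the inequalities in the chain must be equalities; in particular $d(\gamma(s),\gamma(t)) = (t-s)d(u,v) = |s-t|d(u,v)$, which is exactly~\eqref{e:lengthmin}. Since $s,t$ were arbitrary, $\gamma$ is a geodesic.

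I do not anticipate any real obstacle here: the argument is a short application of the triangle inequality, and the only mild point to be careful about is handling the ordering of $s$ and $t$ (which is dealt with by the absolute value, or by assuming $s \leq t$ without loss of generality). One should also note the degenerate cases $s=t$ or $\{s,t\}=\{0,1\}$ are trivially fine. The statement is essentially the observation that a curve which is ``$1$-Lipschitz after rescaling'' and connects two points at the maximal possible distance must in fact be distance-preserving on the nose.
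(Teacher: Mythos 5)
Your proof is correct and follows essentially the same argument as the paper: the forward direction is immediate from the definition, and the converse uses the same triangle-inequality chain $d(u,v) \leq d(\gamma(0),\gamma(s)) + d(\gamma(s),\gamma(t)) + d(\gamma(t),\gamma(1)) \leq d(u,v)$ to force equality throughout.
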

\begin{proof}
 The forward implication is trivial, so for the backward implication suppose that for all $s,t \in [0,1]$ we have $d(\gamma(s),\gamma(t)) \leq |s-t|d(u,v)$. Then if $0\leq s\leq t \leq 1$, by the triangle equality,
  \begin{align*}d(u,v) = d(\gamma(0),\gamma(1)) &\leq d(\gamma(0),\gamma(s)) + d(\gamma(s),\gamma(t)) + d(\gamma(t),\gamma(1))\\ &\leq |s-0|d(u,v) + |t-s|d(u,v) + |1-t|d(u,v)\\ &= d(u,v),\end{align*}
  so there is equality throughout. In particular $d(\gamma(s),\gamma(t)) = |t-s|d(u,v)$, so $\gamma$ is a geodesic.  
\end{proof}
 Two geodesics are said to be \textit{disjoint} if their images do not intersect anywhere except possibly the endpoints. 
 We now construct disjoint geodesics between two points under the assumption that there are multiple distinct `intermediate' points between the two points. 
\begin{lemma}\label{lem:geo}
  Let $(X,||\cdot||)$ be a normed vector space and let $u,v \in X$. Suppose $x,y \in X$ are distinct and $C \in (0,1)$ is such that $||x-u|| = ||y-u|| = C||u-v||$ and $||v-x|| = ||v-y|| = (1-C)||u-v||$. Then the curves $\gamma$ and $\sigma$, defined below, are disjoint geodesics from $u$ to $v$: 
  \begin{align*}
   &\gamma(z) = \begin{cases} \frac{C-z}{C}u + \frac{z}{C}x & \mbox{ if } 0 \leq z \leq C\\
    \frac{1-z}{1-C}x + \frac{z-C}{1-C}v & \mbox{ if } C < z \leq 1, \end{cases} \\
&\sigma(z) = \begin{cases} \frac{C-z}{C}u + \frac{z}{C}y & \mbox{ if } 0 \leq z \leq C\\
    \frac{1-z}{1-C}y + \frac{z-C}{1-C}v & \mbox{ if } C < z \leq 1. \end{cases} 
    \end{align*}
\end{lemma}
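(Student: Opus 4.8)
The plan is to verify directly that $\gamma$ and $\sigma$ are geodesics using Lemma~\ref{lem:leqgeo}, and then to show disjointness using the hypothesis that $x \neq y$ together with convexity of the norm. For the geodesic property of $\gamma$, the key observation is that on each of the two pieces $[0,C]$ and $[C,1]$, the curve $\gamma$ is an affine parametrisation of a straight line segment (from $u$ to $x$, respectively from $x$ to $v$), and the hypotheses $\norm{x-u} = C\norm{u-v}$ and $\norm{v-x} = (1-C)\norm{u-v}$ are exactly what is needed to make these pieces traversed at the correct constant speed $\norm{u-v}$. So for $s,t$ both in $[0,C]$ or both in $[C,1]$ one gets $d(\gamma(s),\gamma(t)) = |s-t|\norm{u-v}$ by a direct computation.

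The remaining case for the geodesic property is $s \in [0,C]$ and $t \in [C,1]$ (or vice versa). Here I would invoke Lemma~\ref{lem:leqgeo} so that it suffices to prove the inequality $d(\gamma(s),\gamma(t)) \leq |s-t|\norm{u-v}$ rather than equality. Writing $\gamma(s)$ on the segment $[u,x]$ and $\gamma(t)$ on the segment $[x,v]$, the triangle inequality gives $\norm{\gamma(s)-\gamma(t)} \leq \norm{\gamma(s)-x} + \norm{x-\gamma(t)}$; each of these two terms is then computed from the speed-$\norm{u-v}$ parametrisation on its respective piece, namely $\norm{\gamma(s)-x} = (C-s)\norm{u-v}$ and $\norm{x-\gamma(t)} = (t-C)\norm{u-v}$, which sum to $(t-s)\norm{u-v} = |s-t|\norm{u-v}$. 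The identical argument applies to $\sigma$.

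For disjointness, I need to check that the images of $\gamma$ and $\sigma$ meet only possibly at the endpoints $u$ and $v$. A point in the image of $\gamma$ has the form $\lambda u + (1-\lambda)x$ with $\lambda \in [0,1]$ (the first piece) or $\mu x + (1-\mu)v$ with $\mu \in [0,1]$ (the second piece), and similarly for $\sigma$ with $y$ in place of $x$. Suppose such a point lies in both images; I would split into the handful of cases according to which pieces the common point lies on. The crucial sub-case is when a point on $[u,x]$ equals a point on $[u,y]$: if $\lambda u + (1-\lambda)x = \lambda' u + (1-\lambda')y$ with the point not equal to $u$, then comparing norms of the difference from $u$ forces $(1-\lambda)\norm{x-u} = (1-\lambda')\norm{y-u}$, hence $\lambda = \lambda'$ since $\norm{x-u}=\norm{y-u}$, and then $(1-\lambda)(x-y)=0$ gives $x=y$, a contradiction (unless the point is $u$ itself). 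The case of $[x,v]$ meeting $[y,v]$ is symmetric using $\norm{v-x}=\norm{v-y}$. The mixed cases, where a point on $[u,x]$ equals a point on $[y,v]$, require a short argument: such a common point $p$ satisfies $\norm{p-u} \le C\norm{u-v}$ and $\norm{p-v} \le (1-C)\norm{u-v}$ with the sum of these being $\norm{u-v}$ only on the first segment at its endpoint behaviour, so one checks that equality in the triangle inequality $\norm{u-v} \le \norm{u-p}+\norm{p-v}$ pins $p$ down to lie on $[u,x] \cap [x,v] = \{x\}$ and likewise on $\{y\}$, forcing $x=y$ again, or else $p \in \{u,v\}$.

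The main obstacle I anticipate is the bookkeeping in the disjointness argument: there are several cases (first piece vs.\ first piece, first vs.\ second, etc.), and in the mixed cases one must be careful that the norm is not assumed strictly convex, so one cannot conclude that two segments from a common point are collinear just from an equality of norms — instead one must exploit the specific distance constraints $\norm{x-u}=C\norm{u-v}$ and $\norm{x-v}=(1-C)\norm{u-v}$ and the fact that these force any common intermediate point to realise equality in a triangle inequality, which then identifies it with $x$ (and with $y$). Everything else is routine affine algebra and the triangle inequality.
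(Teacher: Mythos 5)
Your proposal is correct and follows essentially the same route as the paper: the geodesic property is verified exactly as there (exact computation on each affine piece, the triangle inequality through $x$ for the mixed case $s<C<t$, then Lemma~\ref{lem:leqgeo}). For disjointness the paper takes a shortcut that eliminates your case bookkeeping: once $\gamma$ and $\sigma$ are known to be geodesics, $\gamma(w)=\sigma(z)$ gives $||\gamma(w)-u||=||\sigma(z)-u||$, i.e.\ $w||u-v||=z||u-v||$ and hence $w=z$, so the two points lie on corresponding pieces and only your ``first piece vs.\ first piece'' and ``second vs.\ second'' comparisons are ever needed.
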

\begin{proof}
Rescaling by $1/||u-v||$ and then translating, we may assume without loss of generality that $u=0$ and $||u-v||=1$, so $||x|| = C$ and $||v-x|| = 1-C$. Let $0\leq s <t \leq 1$. If $t\leq C$ then $||\gamma(t)-\gamma(s)|| = ||(t-s)x/C|| = t-s$. If $C \leq s$ then $||\gamma(t)-\gamma(s)|| = ||(t-s)(v-x)/(1-C)||= t-s$. If $s<C<t$ then 
\[ ||\gamma(t)-\gamma(s)|| \leq ||\gamma(t)-\gamma(C)|| + ||\gamma(C)-\gamma(s)|| = t-C+C-s = t-s.\] 
Therefore by Lemma~\ref{lem:leqgeo}, $\gamma$ is a geodesic. By a similar argument, $\sigma$ is also a geodesic.

  Now suppose that $w,z \in [0,1]$ are such that $\gamma(w) = \sigma(z)$. Then $||\gamma(w)-u|| = ||\sigma(z)-u||$, i.e. $w||v-u||=z||v-u||$, so $w=z$. If $z\leq C$ then $\gamma(w) = \sigma(z)$ implies that $zx/C=zy/C$, but $x\neq y$, so $z=0$. If $z>C$ then ${(1-z)x}/{(1-C)} = {(1-z)y}/{(1-C)}$, so $z=1$. Hence $\gamma$ and $\sigma$ intersect only at $u$ and $v$, so are disjoint. 
\end{proof}
We can now give a condition that guarantees the existence of multiple geodesics between two points. 
\begin{lemma}\label{l:main}
  Let $(X,||\cdot||)$ be a normed vector space, and let $u,v \in X$ be distinct. The following are equivalent: 
  \begin{enumerate}
  \item\label{i:distinct} There are at least two distinct geodesics from $u$ to $v$. 
  \item\label{i:uncountable} There is an uncountable family of pairwise-disjoint geodesics from $u$ to $v$. 
  \item\label{i:existc} There exist $C \in (0,1)$ and distinct $x,y \in X$ such that 
  \begin{align}\label{e:easycondition} 
  \begin{split}
  &||x-u|| = ||y-u|| = C||u-v||, \mbox{ and} \\
  &||v-x|| = ||v-y|| = (1-C)||u-v||.
  \end{split}
  \end{align}
  \end{enumerate}
  \end{lemma}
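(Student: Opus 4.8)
The plan is to prove the cycle of implications $(\ref{i:uncountable}) \Rightarrow (\ref{i:distinct}) \Rightarrow (\ref{i:existc}) \Rightarrow (\ref{i:uncountable})$. The implication $(\ref{i:uncountable}) \Rightarrow (\ref{i:distinct})$ is immediate, since an uncountable family of geodesics certainly contains two distinct members (disjointness is not needed). For $(\ref{i:distinct}) \Rightarrow (\ref{i:existc})$, I would take two distinct geodesics $\gamma, \delta \colon [0,1] \to X$ from $u$ to $v$. As $\gamma(0) = \delta(0) = u$ and $\gamma(1) = \delta(1) = v$, the set of parameters where they disagree is a nonempty subset of $(0,1)$; choose any $C$ in it and put $x = \gamma(C)$ and $y = \delta(C)$, so $x \neq y$. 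Applying the defining identity~\eqref{e:lengthmin} to the pairs $(0,C)$ and $(C,1)$ for each of $\gamma$ and $\delta$ gives precisely the four equalities of~\eqref{e:easycondition}, with $C \in (0,1)$.

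The substantive step is $(\ref{i:existc}) \Rightarrow (\ref{i:uncountable})$. Given $C$ and distinct $x, y$ satisfying~\eqref{e:easycondition}, I would first enlarge $\{x, y\}$ to a continuum of ``intermediate'' points by setting $x_\lambda = (1-\lambda)x + \lambda y$ for $\lambda \in [0,1]$. These are pairwise distinct because $x \neq y$, so there are uncountably many of them, and each satisfies the hypotheses of~\eqref{e:easycondition} with the same $C$: convexity of the norm gives $\|x_\lambda - u\| \leq C\|u-v\|$ and $\|v - x_\lambda\| \leq (1-C)\|u-v\|$, while $\|u-v\| \leq \|u - x_\lambda\| + \|x_\lambda - v\|$ forces both of these to be equalities. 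For each $\lambda$, let $\gamma_\lambda$ be the curve given by the formula for $\gamma$ in Lemma~\ref{lem:geo} with $x$ replaced by $x_\lambda$. The computation in the proof of that lemma — which uses only the norm identities just verified — shows each $\gamma_\lambda$ is a geodesic from $u$ to $v$, and the disjointness argument there (comparing $\|\gamma_\lambda(w) - u\|$ with $\|\gamma_\mu(z) - u\|$ to force $w = z$, then using $x_\lambda \neq x_\mu$ to force $z \in \{0,1\}$) shows $\gamma_\lambda$ and $\gamma_\mu$ are disjoint whenever $\lambda \neq \mu$. Thus $\{\gamma_\lambda : \lambda \in [0,1]\}$ is the required uncountable pairwise-disjoint family.

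I do not expect a genuine obstacle here: Lemma~\ref{lem:geo} already carries out the geometric work, and the only new idea is that the whole segment $[x,y]$ consists of valid intermediate points (because the set of such points is an intersection of two norm balls, hence convex). The only place needing mild care is checking that the disjointness computation of Lemma~\ref{lem:geo} transfers verbatim to a pair of curves built from two different intermediate points $x_\lambda \neq x_\mu$ rather than from the original pair $x \neq y$, but this is routine.
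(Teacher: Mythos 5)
Your proposal is correct and follows essentially the same route as the paper: the trivial implication \ref{i:uncountable}~$\Rightarrow$~\ref{i:distinct}, extracting $x=\gamma(C)$, $y=\sigma(C)$ at a parameter where two geodesics differ for \ref{i:distinct}~$\Rightarrow$~\ref{i:existc}, and for \ref{i:existc}~$\Rightarrow$~\ref{i:uncountable} the convex-combination family $\lambda x + (1-\lambda)y$ verified via the triangle inequality and fed into Lemma~\ref{lem:geo}. Your explicit remark that the disjointness computation of Lemma~\ref{lem:geo} must be applied to each pair $x_\lambda \neq x_\mu$ is a point the paper leaves implicit, but it is routine as you say.
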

\begin{proof}
Rescaling, we assume without loss of generality that $||u-v||=1$. 
The implication~\ref{i:uncountable}~$\Rightarrow$ \ref{i:distinct}~is trivial. To show \ref{i:distinct}~$\Rightarrow$~\ref{i:existc}, note that if $\gamma$ and $\sigma$ are distinct geodesics from $u$ to $v$ then there exists $C \in (0,1)$ such that $\gamma(C) \neq \sigma(C)$, so \ref{i:existc} holds if we set $x = \gamma(C)$ and $y = \sigma(C)$. 

It remains to show~\ref{i:existc}~$\Rightarrow$~\ref{i:uncountable}. Assume~\ref{i:existc} holds, and for $\lambda \in [0,1]$ define $f(\lambda) \coloneqq \lambda x + (1-\lambda) y$. Then 
\[ ||f(\lambda) - u|| \leq \lambda||x-u|| + (1-\lambda)||y-u|| = C. \]
Similarly $||v-f(\lambda)|| \leq 1-C$, so again using the triangle inequality, $||f(\lambda) - u|| = C$ and $||v-f(\lambda)|| = 1-C$. By Lemma~\ref{lem:geo} there exists an uncountable family $\{\gamma_\lambda\}_{0 \leq \lambda \leq 1}$ of pairwise-disjoint geodesics from $u$ to $v$, with $\gamma_\lambda(C) = f(\lambda)$. 
\end{proof}
The dashed lines in Figure~\ref{f:onenorm} show a selection from the uncountable family of geodesics from $(0,0)$ to $(1,1)$ in the space $(\mathbb{R}^2,||\cdot ||_1)$. Each is given by a straight line from $(0,0)$ to an intermediate point $(x,1-x)$ for some $x \in [0,1]$, followed by the straight line from $(x,1-x)$ to $(1,1)$.

Theorem~\ref{thm:main} now follows from Lemma~\ref{l:main}. 
Condition~\ref{i2:existc} below is often straightforward to check, and provides a useful characterisation of multigeodesic normed spaces. 
\begin{theorem}\label{thm:main}%
If $(X,||\cdot ||)$ is a normed vector space, the following are equivalent: 
\begin{enumerate}
  \item\label{i2:distinct} The space $(X,||\cdot ||)$ is multigeodesic. 
  \item\label{i2:uncountable} Between any two distinct points in $X$ there is an uncountable family of pairwise-disjoint geodesics. 
  \item\label{i2:existc} For all $x \in X$ with $||x|| = 1$ there exist $C \in (0,1)$ and $y \in X \setminus \{Cx\}$ such that $||y|| = C$ and $||x-y|| = 1-C$. 
  \end{enumerate}
\end{theorem}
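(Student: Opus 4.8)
The plan is to deduce Theorem~\ref{thm:main} from Lemma~\ref{l:main} by exploiting the homogeneity of the norm. The key observation is that Lemma~\ref{l:main} is a statement about a \emph{fixed} pair $u,v$, while the theorem quantifies over all pairs; so the work is in showing that the condition in Lemma~\ref{l:main}\ref{i:existc}, which a priori could hold for some pairs but not others, in fact holds for every pair as soon as it holds for the canonical pair $0$ and a unit vector. First I would prove \ref{i2:uncountable}~$\Rightarrow$~\ref{i2:distinct}, which is immediate, and \ref{i2:distinct}~$\Rightarrow$~\ref{i2:existc}: if the space is multigeodesic, then in particular there are two distinct geodesics from $0$ to $x$ for any unit vector $x$, so Lemma~\ref{l:main}\ref{i:existc} applied to the pair $u=0$, $v=x$ gives $C \in (0,1)$ and distinct $x',y' \in X$ with $\|x'\| = \|y'\| = C$ and $\|x-x'\| = \|x-y'\| = 1-C$; at least one of $x',y'$ differs from $Cx$, and that one serves as the required $y$.

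The main content is \ref{i2:existc}~$\Rightarrow$~\ref{i2:uncountable}. Let $u,v \in X$ be distinct; I would like to verify Lemma~\ref{l:main}\ref{i:existc} for this pair, which by Lemma~\ref{l:main} then yields the uncountable pairwise-disjoint family. Set $x \coloneqq (v-u)/\|v-u\|$, a unit vector, and apply hypothesis \ref{i2:existc} to obtain $C \in (0,1)$ and $y \in X \setminus \{Cx\}$ with $\|y\| = C$ and $\|x-y\| = 1-C$. Now translate and rescale: put $x' \coloneqq u + \|v-u\|\,y$ and $y' \coloneqq u + \|v-u\|(x - y)$... actually the cleaner choice is to take the two intermediate points to be $u + \|v-u\|\,y$ and (using the symmetry $y \leftrightarrow x-y$, since $\|x-y\|=1-C$ and $\|x-(x-y)\| = \|y\| = C$ would require swapping roles of $C$ and $1-C$) — to keep $C$ fixed I would instead only need the single point $p \coloneqq u + \|v-u\|\,y$ together with $Cv + (1-C)u$, but these may coincide; so the robust route is: the hypothesis gives $y \neq Cx$, hence the two points $p_1 \coloneqq u + \|v-u\|\,y$ and $p_2 \coloneqq u + \|v-u\|\,Cx = (1-C)u + Cv$ are distinct, and both satisfy $\|p_i - u\| = C\|v-u\|$ and $\|v - p_i\| = (1-C)\|v-u\|$ (for $p_1$, because $\|y\|=C$ and $\|x-y\| = 1-C$ rescale correctly; for $p_2$ this is the trivial straight-line check). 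Thus Lemma~\ref{l:main}\ref{i:existc} holds for $u,v$ with witnesses $p_1, p_2$, and Lemma~\ref{l:main} finishes the job.

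Finally I would note that \ref{i2:uncountable}~$\Rightarrow$~\ref{i2:distinct}~$\Rightarrow$~\ref{i2:existc}~$\Rightarrow$~\ref{i2:uncountable} closes the cycle, establishing the equivalence of all three. The only place requiring care — and the step I expect to be the main obstacle — is the bookkeeping in the scaling/translation argument of \ref{i2:existc}~$\Rightarrow$~\ref{i2:uncountable}: one must make sure the normalisation direction is chosen so that the constant $C$ supplied by the hypothesis is the same $C$ fed into Lemma~\ref{l:main}, and that the two produced intermediate points are genuinely distinct (which is exactly what the clause $y \neq Cx$ guarantees). Everything else is a direct appeal to the already-proven lemmas and the $1$-homogeneity and translation-invariance of the metric induced by a norm.
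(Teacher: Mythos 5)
Your proposal is correct and follows essentially the same route as the paper: both directions are reduced to Lemma~\ref{l:main} via the normalisation $x = (v-u)/\|v-u\|$, with your argument merely making explicit the translation/rescaling bookkeeping (the points $p_1 = u + \|v-u\|\,y$ and $p_2 = (1-C)u+Cv$) that the paper leaves implicit. No gaps.
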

\begin{proof}
The implication~\ref{i2:uncountable}~$\Rightarrow$~\ref{i2:distinct} is trivial. 
The implication~\ref{i2:distinct}~$\Rightarrow$~\ref{i2:existc} follows from the corresponding implication in Lemma~\ref{l:main} with $u=0$ and $v=x$. 
For~\ref{i2:existc}~$\Rightarrow$~\ref{i2:uncountable}, note that if $u,v \in X$ are distinct, then since condition~\ref{i2:existc} with $x = \frac{v-u}{||v-u||}$ holds, condition~\ref{i:existc} from Lemma~\ref{l:main} also holds, so there is an uncountable family of pairwise-disjoint geodesics from $u$ to $v$, as required. 
\end{proof}

We now give several corollaries. 
We provide a simple first example of a multigeodesic normed space. 
\begin{corollary}\label{cor:cts}
The normed vector space $(C[0,1], ||\cdot||_1)$ is multigeodesic. 
\end{corollary}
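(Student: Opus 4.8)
The plan is to invoke Theorem~\ref{thm:main}: it suffices to verify condition~\ref{i2:existc} for the space $(C[0,1],\|\cdot\|_1)$, where $\|f\|_1=\int_0^1|f(t)|\,dt$. So fix $x\in C[0,1]$ with $\|x\|_1=1$; I must produce $C\in(0,1)$ and $y\in C[0,1]\setminus\{Cx\}$ with $\|y\|_1=C$ and $\|x-y\|_1=1-C$.

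The key idea is to let the ``interpolation parameter'' vary along the interval rather than be constant: set
\[
y(t)\coloneqq t\,x(t)\qquad(t\in[0,1]),\qquad C\coloneqq\int_0^1 t\,|x(t)|\,dt .
\]
Then $y$ is continuous as a product of continuous functions. Since $0\le t\le 1$, we have $|y(t)|=t\,|x(t)|$ and $|x(t)-y(t)|=(1-t)\,|x(t)|$, whence $\|y\|_1=\int_0^1 t\,|x(t)|\,dt=C$ and $\|x-y\|_1=\int_0^1(1-t)\,|x(t)|\,dt=\|x\|_1-C=1-C$. So the two norm identities hold essentially by construction, and the only thing left is to check the three non-degeneracy conditions $C\neq 0$, $C\neq 1$, and $y\neq Cx$.

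These all follow from the same observation: each would force $x$ to vanish off a dense subset of $[0,1]$, hence $x\equiv 0$ by continuity, contradicting $\|x\|_1=1$. Concretely, $C=0$ forces $t|x(t)|=0$ for a.e.\ $t$, so $x\equiv 0$ on $(0,1]$; $C=1$ forces $(1-t)|x(t)|=0$ for a.e.\ $t$, so $x\equiv 0$ on $[0,1)$; and $y=Cx$ means $(t-C)x(t)=0$ for all $t$, so $x$ vanishes for every $t\neq C$. In each case $x$ is the zero function, a contradiction. Therefore $C\in(0,1)$ and $y\in C[0,1]\setminus\{Cx\}$, so condition~\ref{i2:existc} of Theorem~\ref{thm:main} is satisfied and $(C[0,1],\|\cdot\|_1)$ is multigeodesic.

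I do not anticipate a real obstacle here: the only point requiring a moment's care is that the candidate data genuinely avoids the degenerate cases, and that is handled uniformly by the density-of-$\{t\neq C\}$ argument above. One could also record, via Lemma~\ref{lem:geo}, the explicit uncountable family of pairwise-disjoint geodesics between two given functions $u,v\in C[0,1]$ that this construction yields, but that is not needed for the corollary as stated.
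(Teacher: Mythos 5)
Your proposal is correct and follows essentially the same route as the paper: both take the intermediate function $y(t)=t\,x(t)$ with $C=\|y\|_1$, verify the two norm identities by the pointwise decomposition $|x(t)|=t|x(t)|+(1-t)|x(t)|$, and rule out the degenerate cases $C\in\{0,1\}$ and $y=Cx$ using continuity of $x$. No issues.
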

\begin{proof}
Let $g \colon [0,1] \to \mathbb{R}$ be a continuous function with $||g||_1 = 1$. Define $h(x) = xg(x)$ and let $C \coloneqq ||h||_1$. Note that $0 \leq |h| \leq |g|$ pointwise, and there is some interval on which $0<|h|$ and some interval on which $|g(x)|(1-x)>0$, so $|h|<|g|$, so $0 < C < 1$. Now,
\begin{align*}
    ||h||_1+||g-h||_1 &= \int_0^1|xg(x)|dx + \int_0^1|g(x)-xg(x)|dx\\
    &= \int_0^1(x+(1-x))|g(x)|dx\\
    &= ||g||_1,
  \end{align*}
so $||g-h||_1=||g||_1-||h||_1=1-C$. Moreover, there is some interval of $x$ on which $g(x)(C-x)$ is non-zero, so $Cg(x) \neq h(x)$. Therefore by condition~\ref{i2:existc} of Theorem~\ref{thm:main}, $(C[0,1], ||\cdot||_1)$ is a multigeodesic space. 
\end{proof}
\begin{figure}[ht]
\center{\includegraphics[width=.85\textwidth]
        {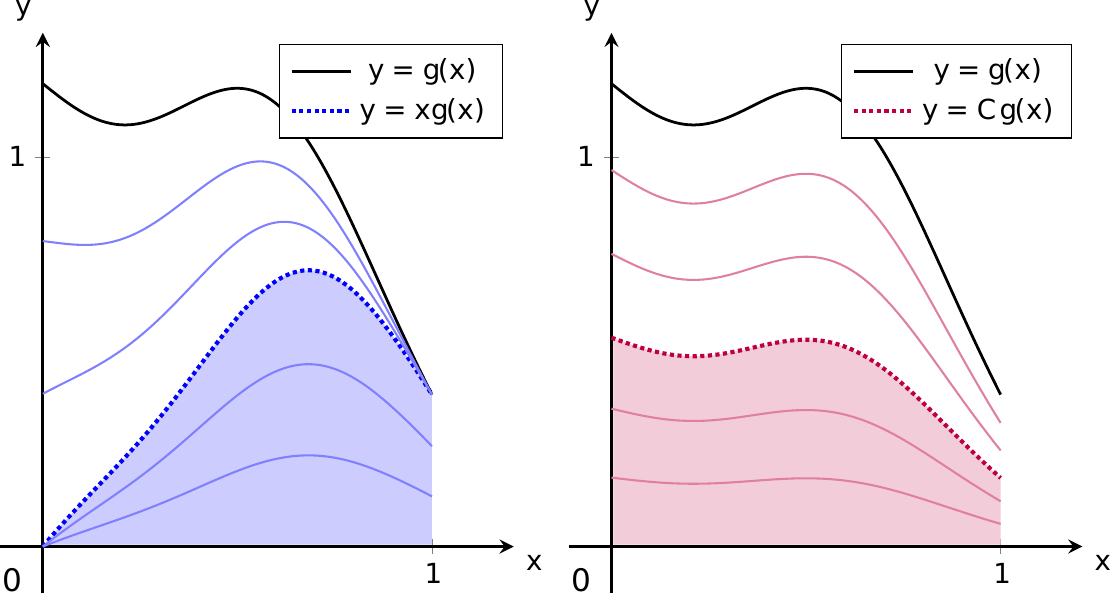}}
        \caption{\label{f:cts}
        Some functions lying on two disjoint geodesics between 0 and a function $g$ with $||g||_1 = 1$, in the space $(C[0,1], ||\cdot||_1)$ from Corollary~\ref{cor:cts}. Left: the geodesic goes through the function $h(x) = xg(x)$. Right: the geodesic goes through the alternative `intermediate' function $Cg$, where $C = ||h||_1 \approx 0.45$ is such that the two shaded regions have equal areas. 
}
\end{figure}
Examples of the functions $g$, $Cg$ and $h$ from the proof of Corollary~\ref{cor:cts} are shown in Figure~\ref{f:cts}. Notice that the example in Corollary~\ref{cor:cts} was an infinite-dimensional vector space. 
\begin{corollary}\label{c:finite}
No finite-dimensional normed vector space is multigeodesic. 
\end{corollary}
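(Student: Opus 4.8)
The plan is to apply the characterisation in Theorem~\ref{thm:main}: a finite-dimensional space fails to be multigeodesic as soon as condition~\ref{i2:existc} fails at a \emph{single} unit vector $x$, so it suffices to produce a unit vector $x \in X$ with the property that whenever $C \in (0,1)$ and $y \in X$ satisfy $\|y\| = C$ and $\|x - y\| = 1-C$, one necessarily has $y = Cx$. The first step is to note what such a pair $(C,y)$ means geometrically: setting $z \coloneqq y/C$ and $w \coloneqq (x-y)/(1-C)$, both $z$ and $w$ lie on the unit sphere and $x = Cz + (1-C)w$, so condition~\ref{i2:existc} fails at $x$ precisely when $x$ admits no nontrivial representation as a convex combination of two unit vectors, i.e.\ when $x$ is an extreme point of the closed unit ball $B$. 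Thus the corollary reduces to the classical fact that the unit ball of a finite-dimensional normed space has an extreme point, which holds because $B$ is nonempty, convex, and compact (closed and bounded, all norms on a finite-dimensional space being equivalent), by the Krein--Milman theorem.

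Rather than invoking Krein--Milman, I would produce the extreme point explicitly. Fix an auxiliary Euclidean norm $\|\cdot\|_2$ on $X$. The unit sphere $S \coloneqq \{x \in X : \|x\| = 1\}$ is compact, so we may choose $x \in S$ maximising $\|\cdot\|_2$ over $S$. Now suppose $C \in (0,1)$ and $y \in X$ satisfy $\|y\| = C$ and $\|x-y\| = 1-C$; then $y/C \in S$ and $(x-y)/(1-C) \in S$, so $\|y\|_2 \le C\|x\|_2$ and $\|x-y\|_2 \le (1-C)\|x\|_2$. Adding these and applying the triangle inequality for $\|\cdot\|_2$ gives
\[
 \|x\|_2 \le \|y\|_2 + \|x-y\|_2 \le \|x\|_2 ,
\]
so equality holds throughout. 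Equality in the Euclidean triangle inequality forces $y$ and $x-y$ to be non-negatively proportional (using strict convexity of $\|\cdot\|_2$), and since neither is zero — as $C \ne 0$ and $C \ne 1$ — we get $y = tx$ for some $t \in (0,1)$; the constraint $\|y\|_2 = C\|x\|_2$ then pins down $t = C$, so $y = Cx$. Hence condition~\ref{i2:existc} of Theorem~\ref{thm:main} fails at this $x$, and therefore $X$ is not multigeodesic.

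I do not expect a genuine obstacle here: once Theorem~\ref{thm:main} is available the corollary is short, and the only point needing a little care is the final deduction that equality in the Euclidean triangle inequality together with the norm constraint pins $y$ down to $Cx$ exactly (not merely to some point of the Euclidean segment from $0$ to $x$). The real content is conceptual — the reinterpretation of condition~\ref{i2:existc} in terms of extreme points of the unit ball, and the observation that compactness of $S$ is exactly what is lost in infinite dimensions, which is why spaces such as $(C[0,1],\|\cdot\|_1)$ can be multigeodesic.
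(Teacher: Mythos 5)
Your proof is correct and follows essentially the same route as the paper: both select a unit vector $x$ maximising the auxiliary Euclidean norm over the unit sphere and then deduce $y = Cx$ from the resulting Euclidean constraints, the paper phrasing the last step as two tangent Euclidean balls meeting in a single point while you use the equality case of the Euclidean triangle inequality, which is the same fact. Your extra framing via extreme points and Krein--Milman is a nice conceptual gloss but does not change the argument.
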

\begin{proof}
Every finite-dimensional vector space is isometrically isomorphic to $\mathbb{R}^n$ equipped with some norm $||\cdot||$, so this is the space that we will work with. Note that the closed unit ball $B$ (with respect to $||\cdot||$) is compact. %
The Euclidean norm $||\cdot||_2$ is continuous on $\mathbb{R}^n$, so it attains a maximum on $B$, say $||x||_2$ for some $x \in B$. Clearly $||x||=1$ (since $||x/||x|| \, ||_2 \geq ||x||_2$, there must be equality). 

Let $C \in (0,1)$ and suppose that $y \in \mathbb{R}^n$ satisfies $||y|| = C$ and $||y-x|| = 1-C$. Then by the definition of $x$, $y$ lies in the intersection of two closed Euclidean balls, namely the ball centred at the origin with radius $C||x||_2$, and the ball centred at $x$ with radius $(1-C)||x||_2$. But these balls intersect only in the single point $Cx$, so $y = Cx$. Therefore condition~\ref{i2:existc} of Lemma~\ref{l:main} is not satisfied, so there is only one geodesic from $0$ to $x$. 
\end{proof}
For the space $(\mathbb{R}^2,||\cdot||_1)$, for example, the point $x$ in the proof of Corollary~\ref{c:finite} could be $(-1,0)$, shown in Figure~\ref{f:onenorm}. 
The following corollary demonstrates that while some infinite-dimensional normed spaces are multigeodesic, others are not. 
\begin{corollary}\label{cor:lp}
Let $n \in \mathbb{N}$ and suppose $K \subseteq \mathbb{R}^n$ is Lebesgue measurable with $n$-dimensional Lebesgue measure $\mu(K) = 1$. %
Then $L^p(K)$ is multigeodesic if $p=1$ but not if $p \in (1,\infty]$. 
\end{corollary}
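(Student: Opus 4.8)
The plan is to apply Theorem~\ref{thm:main}, and in particular the concrete characterisation given by condition~\ref{i2:existc}. For $p=1$ I would verify that condition~\ref{i2:existc} holds for every $g \in L^1(K)$ with $\|g\|_1 = 1$; for $p \in (1,\infty]$ I would instead exhibit a single function $x$ of norm $1$ for which condition~\ref{i2:existc} fails, so that by Theorem~\ref{thm:main} the space cannot be multigeodesic. Throughout I use that $\mu(K) = 1 > 0$, so each $L^p(K)$ is nontrivial (it contains $\mathbf{1}_K \neq 0$), and that Lebesgue measure restricted to $K$ is non-atomic: any measurable $A \subseteq K$ with $\mu(A) > 0$ contains a measurable subset $B$ with $0 < \mu(B) < \mu(A)$.

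For $p = 1$, fix $g$ with $\|g\|_1 = 1$ and let $E = \{t \in K : g(t) \neq 0\}$, so $\mu(E) > 0$. Choose a measurable $B \subseteq E$ with $0 < \mu(B) < \mu(E)$, set $y = g \cdot \mathbf{1}_B$, and let $C = \|y\|_1 = \int_B |g|\, d\mu$. Since $|g| > 0$ on $B$ we get $C > 0$, and since $\int_{E \setminus B} |g|\, d\mu > 0$ we get $C = 1 - \int_{E \setminus B} |g|\, d\mu < 1$, so $C \in (0,1)$. Moreover $\|g - y\|_1 = \int_{K \setminus B} |g|\, d\mu = \int_{E \setminus B} |g|\, d\mu = 1 - C$, and $y \neq Cg$ because the two functions differ on $E \setminus B$, a set of positive measure (there $y = 0$ while $Cg \neq 0$). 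Hence condition~\ref{i2:existc} of Theorem~\ref{thm:main} holds, so $L^1(K)$ is multigeodesic. This is the analogue of Corollary~\ref{cor:cts}, with the coarser choice $y = g\mathbf{1}_B$ replacing $h(x) = xg(x)$.

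For $p \in (1,\infty)$, recall from the discussion at the start of this section that a normed space with strictly convex unit ball is uniquely geodesic, hence not multigeodesic, and that $L^p(K)$ is strictly convex for $1 < p < \infty$ (a standard consequence of Clarkson's inequalities, equivalently of the strict form of Minkowski's inequality). If one prefers to argue directly: if $\|x\|_p = 1$, $\|y\|_p = C \in (0,1)$ and $\|x - y\|_p = 1 - C$, then $\|y\|_p + \|x-y\|_p = 1 = \|y + (x-y)\|_p$ with both summands nonzero, so the equality case of Minkowski's inequality forces $x - y = \lambda y$ for some $\lambda \ge 0$; taking norms gives $1 = (1+\lambda)C$ and hence $y = Cx$. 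So the only admissible $y$ is $Cx$ itself, and condition~\ref{i2:existc} fails for every unit $x$.

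For $p = \infty$ the space $L^\infty(K)$ is not strictly convex, so a bespoke choice is needed: take $x = \mathbf{1}_K$, which has $\|x\|_\infty = 1$. If $\|y\|_\infty = C \in (0,1)$ and $\|x - y\|_\infty = 1 - C$, then $|y| \le C$ a.e.\ and $|1 - y| \le 1 - C$ a.e.; the latter gives $y \ge C$ a.e., and combined with $y \le C$ a.e.\ we conclude $y = C$ a.e., i.e.\ $y = Cx$. Thus condition~\ref{i2:existc} fails at $x = \mathbf{1}_K$, so $L^\infty(K)$ is not multigeodesic. I expect the main (though still mild) obstacles to be twofold: handling $p = \infty$, which cannot be reduced to strict convexity and requires spotting the right point $\mathbf{1}_K$; and checking that the $p=1$ construction is legitimate for an \emph{arbitrary} unit vector $g$, which is precisely where non-atomicity of Lebesgue measure is used.
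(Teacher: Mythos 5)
Your proof is correct, and the two halves compare differently with the paper. For $p=1$ you follow essentially the paper's argument: the paper asserts as a ``standard exercise'' that there is a set $E$ with $0 < \int_E |g|\,d\mu < \int_K |g|\,d\mu$ and applies condition~\ref{i2:existc} of Theorem~\ref{thm:main} with $y = g\cdot\chi_E$ and $C = \|g\cdot\chi_E\|_1$; you supply the omitted details via non-atomicity of Lebesgue measure, which is exactly the right justification. For $p \in (1,\infty]$ the routes genuinely diverge. The paper gives a single unified argument: by H\"older's inequality (using $\mu(K)=1$), $1 = \|\chi_K\|_1 \le \|f\|_1 + \|\chi_K - f\|_1 \le \|f\|_p + \|\chi_K - f\|_p$, with equality only if $f$ is a.e.\ constant, so condition~\ref{i:existc} of Lemma~\ref{l:main} fails between $0$ and $\chi_K$ for every $p>1$ at once. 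You instead split the cases: strict convexity (the equality case of Minkowski) for $1<p<\infty$, which shows the condition fails at \emph{every} unit vector and hence that these spaces are uniquely geodesic, and a bespoke computation at $\chi_K$ for $p=\infty$. Your version yields slightly more for $1<p<\infty$, at the cost of a case split and an appeal to the Minkowski equality case; the paper's H\"older trick is shorter and uniform in $p$, needing only the single witness $\chi_K$.
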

Here, $L^p(K)$ denotes the usual Lebesgue space with respect to the $\sigma$-algebra of Lebesgue measurable subsets of $K$ and Lebesgue measure $\mu$. Note that these spaces are complete, unlike the space $(C[0,1], ||\cdot||_1)$ from Corollary~\ref{cor:cts}. 
\begin{proof}
We begin with the $p=1$ case. Given $g \in L^1(K)$ with $||g||_1 = 1$, it is a standard exercise to show that there exists $E \subset K$ such that 
\[ 0 < \int_E |g| d\mu < \int_K |g| d\mu.\] %
Equivalently, $0 < C < 1$, where $C \coloneqq ||g \cdot \chi_E||_1$, where $\chi_E$ denotes the indicator function of $E$. Then condition~\ref{i2:existc} of Theorem~\ref{thm:main} is satisfied by the function $g \cdot \chi_E$, which is distinct from $C \cdot g$ in $L^1(K)$. Therefore $L^1(K)$ is multigeodesic. 

We now suppose $1 < p \leq \infty$. By the triangle inequality and H\"older's inequality, for all $f \in L^1(K)$, 
\[ 1 = ||\chi_K||_1 \leq ||f \cdot \chi_K||_1 + ||(\chi_K - f) \cdot \chi_K||_1 \leq ||f||_p + ||\chi_K - f||_p,\] 
with equality only if $f$ is almost surely constant. %
Therefore condition~\ref{i:existc} of Lemma~\ref{l:main} is not satisfied, so there is only one geodesic from $0$ to $\chi_K$. 
\end{proof}

\section{Metric spaces}\label{s:metric}
It is now natural to consider whether similar conclusions can be drawn in general metric spaces. 
It turns out that the situation in this setting is more complicated. Indeed, there exist multigeodesic metric spaces which have distinct points between which no two geodesics are disjoint, in contrast to condition~\ref{i2:uncountable} of Theorem~\ref{thm:main} for normed spaces. This is clearly the case for the Laakso space, shown in Figure~\ref{f:laakso}.  
In fact, spaces with this property can be constructed from any other multigeodesic space $(Y,d_Y)$ by gluing two copies of it together at a point $y_0 \in Y$, as we now describe. 
Define $X \coloneqq (Y \times \{0,1\})/\sim$ where $\sim$ is the equivalence relation whose only non-trivial relation is $(y_0,0) \sim (y_0,1)$. 
We equip $X$ with the metric given by 
\[ d((x,i),(y,i)) = d_Y(x,y) \qquad \mbox{and} \qquad d((x,i),(y,1-i)) = d(x,y_0) + d(y,y_0)\] 
 for $i \in \{0,1\}$ and $x,y \in Y$. 
If $\gamma$ is a geodesic in $Y$ then the functions $t \mapsto (\gamma(t),0)$ and $t \mapsto (\gamma(t),1)$ clearly define geodesics in $X$. Therefore for any $x,y \in Y$ there are multiple distinct geodesics between $(x,0)$ and $(y,0)$, and between $(x,1)$ and $(y,1)$. 
If $x,y \in Y \setminus \{y_0\}$ then there are many different geodesics between $(x,0)$ and $(y,1)$ formed by concatenating any geodesic from $(x,0)$ to $(y_0,0)$ with any geodesic from $(y_0,1)$ to $(y,1)$. 
Therefore $X$ is indeed a multigeodesic space. 
However, any two geodesics which both start in the open set $(Y \times \{0\}) \setminus \{(y_0,0)\}$ and end in the open set $(Y \times \{1\}) \setminus \{(y_0,1)\}$ must pass through the only other point in $X$ (namely $(y_0,0)$) by continuity, so they cannot be disjoint.

We now observe that a portion of one geodesic can be replaced with another geodesic to form a different geodesic. 
\begin{lemma}\label{lem:paste}
  Let $(X,d)$ be a multigeodesic metric space, let $\gamma \colon [0,1] \to X$ be a geodesic, let $0 \leq s<t \leq 1$, and let $\sigma \colon [0,1] \to X$ be a geodesic from $\gamma(s)$ to $\gamma(t)$. Then $\gamma^{(\sigma,s,t)} \colon [0,1] \to X$ defined by
  \[
    \gamma^{(\sigma,s,t)}(x) =
  \begin{cases}%
    \gamma(x) & \mbox{if } 0\leq x \leq s \mbox{ or } t\leq x \leq 1 \\
    \sigma(\frac{x-s}{t-s}) & \mbox{if } s<x<t
  \end{cases}
  \]
  is a geodesic from $\gamma(0)$ to $\gamma(1)$.
\end{lemma}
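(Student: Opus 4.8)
The plan is to reduce the whole statement to the convenient criterion of Lemma~\ref{lem:leqgeo}. Writing $\eta \coloneqq \gamma^{(\sigma,s,t)}$, I first note that $\eta$ is a well-defined map $[0,1]\to X$ with $\eta(0)=\gamma(0)$ and $\eta(1)=\gamma(1)$ (the two clauses of its definition cover $[0,1]$ without overlap), so it suffices to prove $d(\eta(x),\eta(y)) \leq |x-y|\,d(\gamma(0),\gamma(1))$ for all $x,y\in[0,1]$; Lemma~\ref{lem:leqgeo} will then upgrade this to the equality~\eqref{e:lengthmin} and conclude that $\eta$ is a geodesic from $\gamma(0)$ to $\gamma(1)$.

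First I would set up notation and the basic facts. Put $D \coloneqq d(\gamma(0),\gamma(1))$. Since $\gamma$ is a geodesic, $d(\gamma(s),\gamma(t)) = (t-s)D$; since $\sigma$ is a geodesic from $\gamma(s)$ to $\gamma(t)$, we have $d(\sigma(a),\sigma(b)) = |a-b|(t-s)D$ for all $a,b\in[0,1]$, and in particular $\sigma(0)=\gamma(s)$ and $\sigma(1)=\gamma(t)$. These last two equalities show that the two branches in the definition of $\eta$ agree at $x=s$ and $x=t$, so $\eta(s)=\gamma(s)$, $\eta(t)=\gamma(t)$, and in fact $\eta(x)=\sigma\bigl(\tfrac{x-s}{t-s}\bigr)$ for \emph{all} $x\in[s,t]$ (the hypothesis $s<t$ is what makes this affine rescaling legitimate). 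Consequently $\eta$ restricted to each of the three intervals $[0,s]$, $[s,t]$, $[t,1]$ is a geodesic: on the outer two it coincides with $\gamma$, and on $[s,t]$ it is $\sigma$ reparametrised affinely. Hence $d(\eta(p),\eta(q)) = |p-q|D$ whenever $p$ and $q$ lie in a common one of these three intervals.

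To finish, fix $x<y$ in $[0,1]$ and let $x = p_0 < p_1 < \dots < p_k = y$ be the elements of $\{x,y\}\cup(\{s,t\}\cap[x,y])$ listed in increasing order. Since $s$ and $t$ are among the $p_i$ whenever they lie in $[x,y]$, no $p_i$ lies in the open interval $(p_{j-1},p_j)$, so each $[p_{i-1},p_i]$ contains neither $s$ nor $t$ in its interior; as $[0,s]\cup[s,t]\cup[t,1]=[0,1]$, a short check then shows each $[p_{i-1},p_i]$ is contained in one of these three intervals. Therefore, by the previous paragraph and the triangle inequality,
\[ d(\eta(x),\eta(y)) \;\leq\; \sum_{i=1}^{k} d(\eta(p_{i-1}),\eta(p_i)) \;=\; \sum_{i=1}^{k}(p_i-p_{i-1})D \;=\; (y-x)D, \]
which is exactly the inequality demanded by Lemma~\ref{lem:leqgeo}.

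I do not anticipate a genuine obstacle: the argument is essentially a short case analysis, made painless by the ``$\leq$'' characterisation of geodesics. The only points needing care are the consistency of the piecewise formula at the junctions $s$ and $t$ --- which is precisely where $\sigma(0)=\gamma(s)$ and $\sigma(1)=\gamma(t)$ are used --- and the harmless remark that the multigeodesic hypothesis on $X$ is not actually invoked in this lemma; it is merely the standing assumption of the section.
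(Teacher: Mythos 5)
Your argument is correct and matches the paper's intent: the paper simply asserts that verifying condition~\eqref{e:lengthmin} is a ``straightforward calculation,'' and your write-up --- checking the defining formula is consistent at the junctions, noting the restriction to each of $[0,s]$, $[s,t]$, $[t,1]$ is a geodesic, and then combining the pieces via the triangle inequality and the ``$\leq$'' criterion of Lemma~\ref{lem:leqgeo} --- is exactly that calculation carried out in full. No gaps; your remark that the multigeodesic hypothesis is not actually used here is also accurate.
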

\begin{proof}
It is a straightforward calculation to verify that the condition~\eqref{e:lengthmin} is satisfied. 
\end{proof}
We can now construct many different geodesics between any two points. What makes the proof of Lemma~\ref{l:branch} non-trivial is that in light of the discussion at the start of this section, we cannot always guarantee that the portion of the geodesic which we replace is disjoint from the original geodesic. 
\begin{lemma}\label{l:branch}
Let $(X,d)$ be a multigeodesic metric space, let $\gamma \colon [0,1] \to X$ be a geodesic, and suppose $0 < t < 1$. Then there exists a geodesic $\tau$ from $\gamma(0)$ to $\gamma(1)$ satisfying  
\[ \inf\{ \, s \in (0,1) : \tau(s) \neq \gamma(s) \, \} = t.\] 
\end{lemma}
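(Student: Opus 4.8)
The plan is to build $\tau$ along the lines of Lemma~\ref{lem:paste} by pasting alternative geodesics onto $\gamma$ over a sequence of intervals that shrink down to $t$, while leaving $\gamma$ untouched on $[0,t]$. For $n \ge 1$ put $a_n \coloneqq t + \frac{1-t}{n}$ and $b_n \coloneqq t + \frac{1-t}{n+1}$, so that $a_1 = 1$, $b_n = a_{n+1}$, the intervals $[b_n,a_n]$ cover $(t,1]$ and overlap only at their endpoints, and $a_n,b_n \downarrow t$. Since $\gamma$ is injective (because $\gamma(0)\neq\gamma(1)$), the points $\gamma(b_n)$ and $\gamma(a_n)$ are distinct and $z \mapsto \gamma(b_n + z(a_n-b_n))$ is a geodesic between them; as $X$ is multigeodesic there is a distinct second geodesic $\sigma_n \colon [0,1] \to X$ from $\gamma(b_n)$ to $\gamma(a_n)$, and since $\sigma_n$ agrees with that reparametrised restriction at the endpoints but not everywhere, there is $z_n \in (0,1)$ with $\sigma_n(z_n) \neq \gamma(b_n + z_n(a_n-b_n))$. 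Define $\tau = \gamma$ on $[0,t]$ and $\tau(s) \coloneqq \sigma_n\!\left(\frac{s-b_n}{a_n-b_n}\right)$ for $s \in [b_n,a_n]$; this is well-defined, since the clauses agree at $t$ and at the shared endpoints $a_{n+1}=b_n$ (where each side equals $\gamma(b_n)$), and $\tau(0)=\gamma(0)$, $\tau(1)=\gamma(1)$.

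Assuming $\tau$ is a geodesic, the infimum in the statement equals $t$: on $(0,t]$ we have $\tau=\gamma$, so every $s$ with $\tau(s)\neq\gamma(s)$ satisfies $s>t$, making $t$ a lower bound; and $s_n \coloneqq b_n + z_n(a_n-b_n)$ lies in $(b_n,a_n)\subseteq(t,1)$ with $\tau(s_n)=\sigma_n(z_n)\neq\gamma(s_n)$, while $s_n < a_n \to t$, so the infimum is at most $t$.

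To see $\tau$ is a geodesic I would invoke Lemma~\ref{lem:leqgeo} and check $d(\tau(s),\tau(s'))\le|s-s'|\,D$ for all $s,s'\in[0,1]$, where $D\coloneqq d(\gamma(0),\gamma(1))$. On $[0,t]$ and on each $[b_n,a_n]$ this holds with equality, since $\gamma$ and $\sigma_n$ are geodesics and $d(\gamma(b_n),\gamma(a_n))=(a_n-b_n)D$. If $s,s'$ both lie in $(t,1]$ then, because $b_n\downarrow t$, only finitely many of the intervals $[b_n,a_n]$ meet $[\min(s,s'),\max(s,s')]$, so finitely many applications of the triangle inequality across consecutive pieces give the bound. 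The delicate point — and I expect the main obstacle — is the parameter $t$ itself, at which infinitely many pieces accumulate so that a finite pasting argument cannot be applied directly; there I would argue by a limiting estimate, bounding $d(\tau(t),\tau(s'))$ for $s'>t$ by first passing to parameters $s_k\downarrow t$ via $d(\tau(t),\tau(s_k)) \le d(\gamma(t),\gamma(b_n)) + d(\gamma(b_n),\tau(s_k)) \le (a_n-t)D$ (for the interval $[b_n,a_n]$ containing $s_k$, using $a_n-t\le 2(s_k-t)$ by the choice of $a_n,b_n$), then applying the finite-piece bound on $[s_k,s']$ and letting $k\to\infty$; the general case $s\le t\le s'$ then follows by splitting at $t$. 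This shows $\tau$ is a geodesic from $\gamma(0)$ to $\gamma(1)$, completing the proof.
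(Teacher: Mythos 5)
Your proposal is correct and follows essentially the same route as the paper: both replace $\gamma$ by alternative geodesics on a countable chain of consecutive intervals whose endpoints decrease to $t$, the paper using dyadic intervals $[t+2^{-m},t+2^{-(m-1)}]$ and realising $\tau$ as a pointwise limit of the finitely-pasted geodesics from Lemma~\ref{lem:paste}, so that the geodesic property is inherited in the limit rather than verified directly via Lemma~\ref{lem:leqgeo} as you do. Your explicit estimate at the accumulation point $t$ is sound and simply makes visible what the paper's limit argument handles implicitly.
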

\begin{proof}
Let $n \in \mathbb{N}$ be large enough that $t + 2^{-n} < 1$. Since $X$ is multigeodesic, we can use the Axiom of Choice to choose, for each $m > n$, a geodesic $\sigma_m$ from ${\gamma(t+2^{-m})}$ to ${\gamma(t+2^{-(m-1)})}$ such that $\sigma_m((0,1)) \neq \gamma((t+2^{-m},t+2^{-(m-1)}))$. Define ${\gamma_n \coloneqq \gamma}$. For $m > n$, using notation from Lemma~\ref{lem:paste}, inductively define 
\[ \gamma_m \coloneqq \gamma_{m-1}^{(\sigma_m,t+2^{-m},t+2^{-(m-1)})}.\] 
 Then $\gamma_m(s) = \gamma(s)$ whenever $s \leq t+2^{-m}$ or $s \geq t+2^{-n}$, but there exists $s \in (t+2^{-m},t+2^{-(m-1)})$ such that $\gamma_m(s) \neq \gamma(s)$. For all $s \in [0,1]$, the sequence $(\gamma_m(s))_{m=1}^{\infty}$ is eventually constant, so define $\tau$ by \[\tau(s) \coloneqq \lim_{m \to \infty} \gamma_m(s).\] 
 Since each $\gamma_m$ is a geodesic, $\tau$ is also a geodesic. %
 Clearly $\tau(0) = \gamma(0)$, $\tau(1) = \gamma(1)$, and $\inf\{ \, s \in (0,1) : \tau(s) \neq \gamma(s) \, \} = t$, as required. 
\end{proof}
As with normed spaces, we now have the following result for arbitrary metric spaces. 
\begin{prop}\label{prop:uncountable}
  In any multigeodesic metric space there are uncountably many pairwise-distinct geodesics between any two distinct points. 
\end{prop}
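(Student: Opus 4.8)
The plan is to feed Lemma~\ref{l:branch} a fixed reference geodesic and let the branch parameter $t$ range over the whole interval $(0,1)$. Concretely, fix distinct points $u,v \in X$. Since $X$ is multigeodesic it is in particular geodesic, so there exists at least one geodesic $\gamma \colon [0,1] \to X$ with $\gamma(0) = u$ and $\gamma(1) = v$. For each $t \in (0,1)$, Lemma~\ref{l:branch} (applied to this $\gamma$) yields a geodesic $\tau_t$ from $\gamma(0) = u$ to $\gamma(1) = v$ satisfying
\[ \inf\{\, s \in (0,1) : \tau_t(s) \neq \gamma(s) \,\} = t. \]

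The only thing left to observe is that the map $t \mapsto \tau_t$ is injective. The quantity $\inf\{\, s \in (0,1) : \tau(s) \neq \gamma(s) \,\}$ depends only on the function $\tau$ (with $\gamma$ held fixed), so if $\tau_t = \tau_{t'}$ then necessarily $t = t'$. Hence $\{\tau_t\}_{t \in (0,1)}$ is a family of pairwise-distinct geodesics from $u$ to $v$ indexed by the uncountable set $(0,1)$, which gives the conclusion. (Note this is the metric-space analogue of condition~\ref{i2:uncountable} of Theorem~\ref{thm:main}, but without any disjointness assertion, which as explained at the start of this section is genuinely unavailable here.)

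I do not expect a real obstacle at this stage: all the substantive work — in particular the delicate iterative replacement in which the excised portion of $\gamma$ need not be disjoint from $\gamma$ — has already been carried out in the proof of Lemma~\ref{l:branch}. What remains is only the bookkeeping above, together with the elementary facts that a multigeodesic space is geodesic and that $(0,1)$ is uncountable.
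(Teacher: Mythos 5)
Your proposal is correct and is precisely the argument the paper intends when it says the result is ``immediate from Lemma~\ref{l:branch}'': you apply the lemma to a fixed geodesic for each branch time $t\in(0,1)$ and observe that the infimum characterisation forces the map $t\mapsto\tau_t$ to be injective. The paper simply omits this bookkeeping, so there is nothing to add.
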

\begin{proof}
This is immediate from Lemma~\ref{l:branch}. 
\end{proof}
A geodesic $\gamma \colon [0,1] \to X$ in a metric space $X$ is said to \emph{branch} at time $t \in (0,1)$ if there exists a geodesic $\sigma \colon [0,1] \to X$ such that 
\[ \inf \{ \, s \in (0,1) : \sigma(s) \neq \gamma(s) \, \} = t.\]
The question of when geodesics branch has received some attention, for instance in~\cite{b:mietton,b:ohta}. 
\begin{prop}
In any multigeodesic metric space, every geodesic branches at every time $t \in (0,1)$. 
\end{prop}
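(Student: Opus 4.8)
The plan is to observe that this proposition is essentially an immediate restatement of Lemma~\ref{l:branch}. Let $(X,d)$ be a multigeodesic metric space, let $\gamma \colon [0,1] \to X$ be an arbitrary geodesic, and fix an arbitrary $t \in (0,1)$. Applying Lemma~\ref{l:branch} to this $\gamma$ and this $t$ produces a geodesic $\tau \colon [0,1] \to X$ (indeed a geodesic from $\gamma(0)$ to $\gamma(1)$) with
\[ \inf\{ \, s \in (0,1) : \tau(s) \neq \gamma(s) \, \} = t. \]
Comparing this with the definition of branching stated immediately before the proposition, $\tau$ is precisely a witness that $\gamma$ branches at time $t$. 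Since $\gamma$ and $t \in (0,1)$ were arbitrary, every geodesic in $X$ branches at every time in $(0,1)$, which is the claim.

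I expect no real obstacle here: all of the actual work has already been carried out in the proof of Lemma~\ref{l:branch} — the iterated pasting of alternative geodesic segments on the intervals $(t+2^{-m},t+2^{-(m-1)})$ via Lemma~\ref{lem:paste}, the appeal to the Axiom of Choice to select replacements that genuinely deviate from $\gamma$, and the pointwise (eventually constant) limit producing $\tau$. The only thing left to do is the trivial bookkeeping of checking that the object returned by the lemma fits the definition of a branching witness: $\tau$ has domain $[0,1]$, it is a bona fide geodesic, and its first deviation time from $\gamma$ is exactly $t$. One may also note in passing that the set $\{\, s \in (0,1) : \tau(s) \neq \gamma(s) \,\}$ must be non-empty for its infimum to equal $t \in (0,1)$, so in particular $\tau \neq \gamma$, although this is not needed for the definition of branching.
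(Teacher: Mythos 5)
Your proposal is correct and matches the paper exactly: the paper also deduces this proposition immediately from Lemma~\ref{l:branch}, since the geodesic $\tau$ produced there is by definition a witness that $\gamma$ branches at time $t$. The extra bookkeeping you include is fine but not needed.
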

\begin{proof}
This is immediate from Lemma~\ref{l:branch}. 
\end{proof}

\section*{Acknowledgements}
The author thanks Jonathan Fraser for asking the question which stimulated this work, and for helpful discussions. He thanks Murad Banaji, Kenneth Falconer, Alex Rutar and Yuchen Zhu for useful comments on a draft version of this article. The author thanks two anonymous referees for comments which improved the quality of the article. He is especially grateful to a referee for producing Figure~\ref{f:laakso} and for making him aware of references~\cite{b:laakso,b:lang}. This work was supported by the Leverhulme Trust under grant RPG-2019-034.

\section*{References}
\printbibliography[heading=none]%

\end{document}